\documentclass{article}
\usepackage{geometry}
\usepackage{titling}
\AtEndDocument{\bigskip{\footnotesize%
\textsc{$^*$School of Computing, Dehradun Institute of Technology, Dehradun, Uttarakhand–248009, India} \par  
  \textit{E-mail address:} \texttt{suryagiri456@gmail.com} \par

  \addvspace{\medskipamount}
}}
\usepackage{doc}

\usepackage{url}

\usepackage{graphicx}
\usepackage{epstopdf}
\usepackage{amsthm}
\usepackage{amssymb}
\usepackage{amsmath}
\usepackage{enumitem}

\usepackage{array}
\usepackage{enumitem}
\usepackage[utf8]{inputenc}
\usepackage[english]{babel}
\newtheorem{theorem}{Theorem}

\newtheorem{lemma}[theorem]{Lemma}

\newtheorem*{remark}{Remark}

\DeclareMathOperator{\RE}{Re}

\usepackage{varwidth}
\usepackage{lineno, hyperref}
\usepackage{graphicx}
\usepackage{epstopdf}
\usepackage{amsmath}
\usepackage{amsthm}
\usepackage{enumitem}
\usepackage{amsthm}
\usepackage{float}
\usepackage{subcaption}
\usepackage{caption}
\usepackage{authblk}
\usepackage{abstract}
\usepackage{thmtools}
\declaretheorem[numbered=no,
name=Theorem A]{theoremA}

\begin{document}
\title{Generalized Zalcman Conjecture for Starlike Mappings in Several Complex Variables}
\author{Surya Giri$^*$  }


\date{}


	

\maketitle	

\begin{abstract}
    \noindent    Generalizing the Zalcman conjecture given by $\vert a_n^2 - a_{2n-1}\vert \leq (n-1)^2$,  Ma proposed and proved that the inequality
   $$\vert a_n a_m-a_{n+m-1}\vert \leq (n-1)(m-1), \quad m,n \in \mathbb{N},$$
      holds for functions $f(z)=z+a_2z^2 +a_3 z^3 +\cdots\in \mathcal{S}^*$, the class of starlike functions in the open unit disk.  In this work, we extend this problem to several complex variables for $m=2$ and $n=3$, considering the class of starlike mappings defined on the unit ball in a complex Banach space and on bounded starlike circular domains in $\mathbb{C}^n$.
\end{abstract}
\vspace{0.5cm}
	\noindent \textit{Keywords:} Starlike mappings, Zalcman conjecture, Coefficient problems\\
	\noindent \textit{AMS Subject Classification:} 32H02; 30C45.

\section{Introduction}\label{sec1}
    Let $\mathcal{S}$ be the class of analytic univalent functions $f$ defined on the open unit disk $\mathbb{U}$ having the series expansion
    $f(z)=z+a_2 z^2 +a_3 z^3+\cdots$, $ z\in \mathbb{U}$.
    By $\mathcal{S}^*$, we denote the subclass of $\mathcal{S}$ consisting of starlike functions with respect to the origin. A function $f\in\mathcal{S}^*$ if and only if $\RE (z f'(z)/f(z))>0$ for $z\in \mathbb{U}$.     Let $\mathcal{P}$ be the class of analytic functions $p$ in $\mathbb{U}$ satisfying $p(0)=1$ and $\RE p(z)>0 $.
    In 1960, as an approach to prove the Bieberbach conjecture, Zalcman conjectured that
    $$ \vert a_n^2 - a_{2n-1}\vert \leq (n-1)^2, \quad n \geq 2 $$
    holds for every $f\in \mathcal{S}$.
    In 2010, Krushkal~\cite{Kru1} established the conjecture for $n\leq 6$  using holomorphic homotopy of univalent functions.
    Employing methods from complex geometry and  the theory of universal Teichm\"{u}ller spaces, Krushkal later claimed to have proved the conjecture for all $n\geq 2$ in his unpublished work~\cite{Kru2}. Meanwhile, the Zalcman conjecture and its generalized version were verified  for various subclasses of $\mathcal{S}$~\cite{BroTsa,LiPon,Ma,RavShe}. In particular, Brown and Tsao~\cite{BroTsa} proved the conjecture in 1986 for starlike and typically real functions and  Ma~\cite{Ma} established the result in 1988 for the class of close-to-convex functions.

    In 1999, Ma~\cite{Ma2} proposed the generalized Zalcman conjecture for $f\in \mathcal{S}$, given by
\begin{equation*}
   \vert J_{m,n}(f)\vert := \vert  a_n a_m - a_{n+m-1}\vert \leq (n-1)(m-1), \quad m,n \geq 2.
\end{equation*}
    Although he proved this conjecture for starlike functions and functions in $\mathcal{S}$ with real coefficients, the problem remains open for the entire class $\mathcal{S}$. Ravichandran and Verma~\cite{RavShe} also established the conjecture for several subclasses of $\mathcal{S}$. The following result is a direct consequence of~\cite[Theorem 2.3]{Ma2}.
\begin{theoremA}\label{thmA}\cite{Ma2}
    If $f(z)=z+\sum_{n=2}^\infty a_n z^n \in \mathcal{S}^*$, then $\vert a_2 a_3-a_4\vert \leq 2$. The estimate is sharp.
\end{theoremA}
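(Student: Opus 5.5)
The plan is to pass from $f$ to its associated Carathéodory function, then to a Schwarz function, and reduce the claim to an elementary one-variable inequality.

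\textbf{Step 1: reduce to Carathéodory coefficients.} Since $f\in\mathcal{S}^*$, write $zf'(z)/f(z)=p(z)=1+c_1z+c_2z^2+c_3z^3+\cdots$ with $p\in\mathcal{P}$. Comparing coefficients in $zf'=pf$ gives $(n-1)a_n=\sum_{k=1}^{n-1}c_k a_{n-k}$, with $a_1=1$. Hence
\[
a_2=c_1,\qquad a_3=\tfrac12(c_2+c_1^2),\qquad a_4=\tfrac13 c_3+\tfrac12 c_1c_2+\tfrac16 c_1^3 .
\]
Substituting, the $c_1c_2$ terms cancel in $a_2a_3-a_4$, leaving
\[
a_2a_3-a_4=\tfrac13\,(c_1^3-c_3).
\]
So it suffices to prove $\vert c_3-c_1^3\vert\le 6$ for $p\in\mathcal{P}$.

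\textbf{Step 2: the main obstacle.} The naive Carathéodory bounds $\vert c_k\vert\le2$ only give $10$. Standard lemmas of the type $\vert c_3-2Bc_1c_2+Dc_1^3\vert\le2$ also do not apply, since $B=0$, $D=-1$ lies outside their admissible range. To get the sharp constant I therefore write $p=(1+w)/(1-w)$ with a Schwarz function $w(z)=b_1z+b_2z^2+b_3z^3+\cdots$. Expanding gives
\[
c_1=2b_1,\qquad c_3=2\bigl(b_3+2b_1b_2+b_1^3\bigr),
\]
and therefore
\[
c_3-c_1^3=2\bigl(b_3+2b_1b_2-3b_1^3\bigr).
\]
The claim becomes $\vert b_3+2b_1b_2-3b_1^3\vert\le3$.

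\textbf{Step 3: the one-variable estimate.} I use the classical Schwarz-function bounds $\vert b_1\vert\le1$ and $\vert b_n\vert\le1-\vert b_1\vert^2$ for $n\ge2$. The latter follows by applying the Schwarz–Pick inequality to $w(z)/z$ and its iterates; if needed, I would cite the finer inequality $\vert b_3\vert\le 1-\vert b_1\vert^2-\vert b_2\vert^2/(1+\vert b_1\vert)$. With $r=\vert b_1\vert\in[0,1]$, the triangle inequality gives
\[
\vert b_3+2b_1b_2-3b_1^3\vert\le (1-r^2)+2r(1-r^2)+3r^3 = 1+2r-r^2+r^3=:\phi(r).
\]
Since $\phi'(r)=2-2r+3r^2>0$, $\phi$ is increasing on $[0,1]$, so $\phi(r)\le\phi(1)=3$. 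This yields $\vert c_3-c_1^3\vert\le6$, and hence $\vert a_2a_3-a_4\vert\le2$.

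\textbf{Step 4: sharpness.} Take the Koebe function $k(z)=z/(1-z)^2$, for which $a_n=n$ and $w(z)=z$. Then $\vert a_2a_3-a_4\vert=\vert 6-4\vert=2$, so equality is attained.
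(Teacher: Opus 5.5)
Your proof is correct. The reduction is the same as the paper's, but the key estimate uses a different tool.

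The paper does not prove Theorem A directly; it quotes it from Ma. Its proof of Theorem~\ref{thmB}, specialized to $X=\mathbb{C}$, is in effect a proof of it. That proof reaches the same identity $a_2a_3-a_4=\frac13(c_1^3-c_3)$ as your Step 1 and then writes
\[
c_1^3-c_3=c_1\,(c_1^2-c_2)+(c_1c_2-c_3).
\]
It bounds the pieces with $\vert c_1\vert\le2$, $\vert c_2-c_1^2\vert\le2$ and $\vert c_3-c_1c_2\vert\le2$ from Lemma~\ref{Carath}, so $\vert c_3-c_1^3\vert\le 2\cdot2+2=6$ at once. Your Step 2 remark is therefore too pessimistic. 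No single three-term lemma applies, but a two-term splitting plus the triangle inequality does, with no Schwarz coefficients and no calculus.

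Your route has two advantages. It is self-contained, and it gives a slightly sharper bound depending on $\vert c_1\vert$. With $r=\vert c_1\vert/2$, you get $2\phi(r)=2+4r-2r^2(1-r)$, whereas the paper's argument gives $2+4r$. In Schwarz terms the paper's two lemma inequalities read $\vert b_2-b_1^2\vert\le1$ and $\vert b_3-b_1^3\vert\le1$. Both follow from the same inequality $\vert b_n\vert\le1-\vert b_1\vert^2$ that you use.

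One correction to your justification of that inequality. Schwarz--Pick applied to $\varphi(z)=w(z)/z$ gives only $\vert b_2\vert\le1-\vert b_1\vert^2$, and compositional iterates of $\varphi$ do not isolate $b_3$. The standard argument defines the self-map $G$ of the disk by $G(z^2)=\frac12\bigl(\varphi(z)+\varphi(-z)\bigr)=b_1+b_3z^2+\cdots$. Schwarz--Pick applied to $G$ then gives $\vert b_3\vert=\vert G'(0)\vert\le1-\vert b_1\vert^2$. Alternatively, cite Carlson's inequality as you propose.
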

      The failure of the Bieberbach conjecture and the Riemann mapping theorem in several complex variables~(see \cite{GraKoh}) suggests that classical results from geometric function theory in one complex variable cannot be directly extended to higher-dimensional cases. Xu~\cite{Xu} studied the Fekete-Szeg\"{o} problem for starlike mappings defined on the unit ball in a complex Banach space and on the unit polydisk in $\mathbb{C}^n$. Giri and Kumar~\cite{GirKum2,GirKum3} derived sharp estimates for second and third-order Toeplitz determinants for the same class defined on the unit ball in a complex Banach space, on the unit polydisk and on bounded starlike circular domains in $\mathbb{C}^n$. For some best-possible results concerning coefficient estimates, Toeplitz determinants and the Fekete--Szeg\"{o} functional in higher dimensions, we refer to~\cite{HamHonKoh,HamKohKoh,Xu2,GirKum1} and the references cited therein.

        In this paper, we study the generalized Zalcman conjecture for starlike mappings in higher dimensions. Extending Theorem A to several complex variables, we establish sharp estimates of $\vert J_{2,3}(f)\vert$ for starlike mappings defined on the unit ball in a complex Banach space and on bounded starlike circular domains in $\mathbb{C}^n$.

        Let $X$ be a complex Banach space equipped with a norm $\|\cdot\|$ and let $\mathbb{B}$ denote the unit ball in $X$. Let $\mathcal{H}(X,Y)$ denote the class of all holomorphic mappings from $X$ into another Banach space $Y$. We simply write $\mathcal{H}(\mathbb{B},X):=\mathcal{H}(\mathbb{B})$. If $f\in \mathcal{H}(\mathbb{B})$, then for each $n\in \mathbb{N}$, there exists a bounded symmetric $n-$linear mapping from $ \prod_{j=1}^n X$ into $X$ such that
        $$f(w)= \sum_{n=0}^\infty \frac{1}{n!} D^n f(z)((w-z)^n)$$
        for all $w$ in some neighbourhood of $z\in \mathbb{B}$, where $D^n f(z)$ denotes the $n-$th Fr\'{e}chet derivative of $f$ at $z$. Moreover,
        $ D^n f(z)((w-z)^n)= D^n f(z) \underbrace{( w-z, w-z, \cdots, w-z) }_\text{ n -times}.$
        For each $z\in X\setminus\{0\}$, define
         $$ T_z = \{ l_z \in L(X,\mathbb{C}) : l_z(z) = \| z\|, \| l_z \| = 1\},$$
       where $L(X,Y)$ denotes the space of continuous linear operators from $X$ into $Y$. This set is non-empty by the Hahn-Banach theorem. A mapping $f\in \mathcal{H}(\mathbb{B})$ is said to be biholomorphic if its inverse $f^{-1}$ exists and is holomorphic on $f(\mathbb{B})$. If the Fr\'{e}chet derivative $Df(z)$ of $f\in \mathcal{H}(\mathbb{B})$ has a bounded inverse for each $z\in \mathbb{B}$, then $f$ is called locally biholomorphic. A mapping $f\in \mathcal{H}(\mathbb{B})$ is said to be normalized if $f(0)=0$ and $Df(0)=I$, where $I \in L(X,X)$.

       Let $\Omega \subset \mathbb{C}^n$ be a bounded starlike circular domain with $0\in \Omega$ and let its Miknowski functional $\rho(z)\in \mathcal{C}^1$ (see Lemma~\ref{Minkow}), except on some lower dimensional manifolds in $\mathbb{C}^n$. Let $\partial \Omega$ denote the boundary of $\Omega$. The first and the $m^{th}$ Fr\'{e}chet derivative of a holomorphic mapping $f : \Omega \rightarrow X$  are written by $ D f(z)$ and $D^m f(z) (a^{m-1},\cdot)$, respectively. The matrix representations are
\begin{align*}
    D f(z) &= \bigg(\frac{\partial f_j}{\partial z_k} \bigg)_{1 \leq j, k \leq n}, \\
    D^m f(z)(a^{m-1}, \cdot) &= \bigg( \sum_{p_1,p_2, \cdots, p_{m-1}=1}^n  \frac{ \partial^m f_j (z)}{\partial z_k \partial z_{p_1} \cdots \partial z_{p_{m-1}}} a_{p_1} \cdots a_{p_{m-1}}   \bigg)_{1 \leq j,k \leq n},
\end{align*}
   where $f(z) = (f_1(z), f_2(z), \cdots f_n(z))'$ and $ a= (a_1, a_2, \cdots a_n)'\in \mathbb{C}^n.$
   We need the following lemmas to prove our main results.
\begin{lemma}\cite{Suf}
  Let $f: \mathbb{B} \rightarrow X$ be a normalized locally biholomorphic mapping. The mapping $f$ is said to be starlike on $\mathbb{B}$ if and only if
    $$ \RE (l_z [Df(z)]^{-1}f(z))> 0, \quad x\in \mathbb{B}\setminus \{0\}, \quad l_z \in T_z. $$
    The class of all such mappings on $\mathbb{B}$ is denoted by $\mathcal{S}^*(\mathbb{B})$.
\end{lemma}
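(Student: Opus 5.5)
The lemma is Suffridge's characterization of starlike mappings. I would prove it by comparing $f$ with the flow generated by
$$h(z):=[Df(z)]^{-1}f(z).$$
Here $h$ is holomorphic on $\mathbb{B}$, $h(0)=0$ and $Dh(0)=I$. The key identity is that $f$ intertwines $h$ with the radial field: if $v(t)$ solves $\dot v=-h(v)$, then $\frac{d}{dt}f(v(t))=-Df(v)h(v)=-f(v(t))$. Hence $f(v(t))=e^{-t}f(v(0))$. Both directions are built on this.

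\emph{Necessity.} Assume $f$ is biholomorphic and $f(\mathbb{B})$ is starlike with respect to $0$.
\begin{enumerate}
\item For $t\in(0,1]$ put $v(z,t)=f^{-1}(tf(z))$, a holomorphic self-map of $\mathbb{B}$ with $v(0,t)=0$.
\item For $z\neq 0$, $u=z/\|z\|$ and $l_z\in T_z$, apply the one-variable Schwarz lemma to $\lambda\mapsto l_z(v(\lambda u,t))$ to get $\operatorname{Re} l_z(v(z,t))\le\|v(z,t)\|\le\|z\|=l_z(z)$.
\item Expand at $t=1$: $v(z,t)=z-(1-t)h(z)+o(1-t)$. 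Dividing by $1-t$ and letting $t\to1^-$ gives $\operatorname{Re} l_z(h(z))\ge 0$.
\item To make the inequality strict, fix a unit vector $u$ and $l_u\in T_u$, and set $p(\lambda)=l_u(h(\lambda u))/\lambda$ on $\mathbb{U}$. Since $h(z)=z+O(\|z\|^2)$, $p$ is holomorphic with $p(0)=1$. Because $l_u\in T_{\lambda u}$ up to the unimodular factor $|\lambda|/\lambda$, we have $\operatorname{Re}p\ge0$.
\item The minimum principle for the harmonic function $\operatorname{Re}p$, which is not identically zero, then forces $\operatorname{Re}p>0$. This is exactly $\operatorname{Re} l_z(h(z))>0$.
\end{enumerate}

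\emph{Sufficiency.} Assume $\operatorname{Re} l_z(h(z))>0$ on $\mathbb{B}\setminus\{0\}$.
\begin{enumerate}
\item Apply the Harnack inequality to the function $p$ above. Since $\operatorname{Re}p>0$ and $p(0)=1$, this gives the quantitative bound
$$\operatorname{Re} l_z(h(z))\ge \|z\|\frac{1-\|z\|}{1+\|z\|}.$$
Together with the standard upper bound $\|h(z)\|\le 4\|z\|/(1-\|z\|)^2$, it makes $h$ locally Lipschitz and bounded on balls $r\mathbb{B}$, $r<1$.
\item Solve $\partial_t v=-h(v)$, $v(z,0)=z$, in the Banach space $X$. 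Using the one-sided derivative of the norm, $\frac{d^+}{dt}\|v\|\le-\operatorname{Re} l_v(h(v))<0$. So $\|v(z,t)\|$ decreases, the solution stays in $\|z\|\mathbb{B}$ and exists for all $t\ge0$.
\item Integrating the Harnack bound gives $\|v(z,t)\|\le Ce^{-t}$ with $C$ uniform on compacts, so $v(z,t)\to0$.
\item By the intertwining identity, $f(v(z,t))=e^{-t}f(z)$. Hence $e^{-t}f(\mathbb{B})\subset f(\mathbb{B})$ for all $t\ge0$, so $f(\mathbb{B})$ is starlike with respect to $0$ once univalence is known.
\item For univalence, suppose $f(z_1)=f(z_2)$. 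Then $f(v(z_1,t))=f(v(z_2,t))$ for all $t$. For large $t$ both points lie in a neighbourhood of $0$ on which $f$ is injective, since $Df(0)=I$ and $f$ is locally biholomorphic. So $v(z_1,t)=v(z_2,t)$.
\item Uniqueness for the ODE run backward in time from that moment gives $z_1=z_2$.
\end{enumerate}

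\emph{Main obstacle.} The delicate part is the sufficiency direction in an infinite-dimensional $X$. One needs global existence and the uniform decay of $v$ without compactness, and this rests entirely on the Harnack-type lower bound for $\operatorname{Re} l_z(h(z))$. One also needs the calculus of $\|v(t)\|$ via supporting functionals, because the norm need not be differentiable. I would handle the latter with the upper Dini derivative and the inequality $\|v(t+s)\|\ge\operatorname{Re} l_{v(t)}(v(t+s))$.
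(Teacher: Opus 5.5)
The paper gives no proof of this lemma. It is quoted from Suffridge, and only the ``only if'' direction is used, in the two main theorems, to get $\RE h>0$ and $\RE s>0$. Your argument is the classical one and is correct in substance:
\begin{itemize}
\item for necessity, the Schwarz lemma applied to $v=f^{-1}(tf)$, followed by the minimum principle;
\item for sufficiency, the flow of $-h$ with $h=[Df]^{-1}f$, together with the intertwining $f(v(z,t))=e^{-t}f(z)$.
\end{itemize}
The half the paper actually needs is settled completely by your elementary one-variable steps 1--5.

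Two points in the sufficiency half need repair or a precise citation.

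\textbf{The norm-derivative tool points the wrong way.} In your last paragraph, $\|v(t+s)\|\ge \RE l_{v(t)}(v(t+s))$ yields
$\|v(t+s)\|-\|v(t)\|\ge -s\,\RE l_{v(t)}(h(v(t)))+o(s)$.
This is a lower bound and cannot show that $\|v\|$ decreases. Use the functional at the later time instead:
\[
\|v(t+s)\|-\|v(t)\|\le \RE l_{v(t+s)}\bigl(v(t+s)-v(t)\bigr)= -s\,\RE l_{v(t+s)}\bigl(h(v(t+s))\bigr)+o(s).
\]
Equivalently, use $D^+\|v\|=\max_{l\in T_v}\RE l(-h(v))$.

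Accordingly, $D^+\|v\|\le-\RE l_v(h(v))$ is not valid for an arbitrary $l_v\in T_v$ when $T_v$ is not a singleton. What you actually get is $D^+\|v\|\le-\inf_{l\in T_v}\RE l(h(v))$. This suffices, because your Harnack bound is uniform over $T_v$ and gives $D^+\|v\|\le -\|v\|\frac{1-\|v\|}{1+\|v\|}$. Comparing with $\log\bigl(x/(1-x)^2\bigr)$ then gives $\|v(z,t)\|\le e^{-t}\|z\|/(1-\|z\|)^2$.

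\textbf{Global existence rests on the growth bound, which needs a real citation.} Global existence of the flow depends on the upper bound $\|h(z)\|\le 4\|z\|/(1-\|z\|)^2$, not on the Harnack lower bound. In infinite dimensions this bound is not standard in the elementary sense, because a holomorphic map on $\mathbb{B}$ need not be bounded on $r\mathbb{B}$. It comes from Harris's estimate of the norm of a homogeneous polynomial by a constant times its numerical radius. One applies this to the Taylor terms of $h$, whose numerical radii are at most $2$ by the Carath\'eodory bound for your function $p$. Citing it is fine, but it is the genuinely infinite-dimensional input of your proof and should be cited explicitly as such.
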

\begin{lemma}\label{Minkow}\cite{LiuRen}
  $\Omega \subset \mathbb{C}^n$  is said to be a bounded starlike circular domain if and only if there exists a unique continuous function
 $\rho: \mathbb{C}^n \rightarrow \mathbb{R},$ called the Minkowski functional of $\Omega$, such that
\begin{enumerate}
    \item[(i)] $\rho(z) \geq 0$, $z \in \mathbb{C}^n$; \quad $\rho(z) = 0  \Leftrightarrow z=0$,
    \item[(ii)] $\rho(t z) = \vert t\vert , \rho(z)$, $t \in \mathbb{C}$, $z \in \mathbb{C}^n$,
    \item[(iii)] $\Omega = \{ z \in \mathbb{C}^n : \rho(z) < 1 \}$.
\end{enumerate}
   Furthermore, if $\rho(z)$, $z \in \Omega$, belongs to $\mathcal{C}^1$ except on some lower-dimensional manifold $E \subset \mathbb{C}^n$, then $\rho(z)$ satisfies the following properties:
\begin{align}\label{LmEqn}
   & 2 \frac{\partial \rho(z)}{\partial z} z = \rho(z), \quad z \in \mathbb{C}^n \setminus E, \\
    & 2 \frac{\partial \rho(z)}{\partial z} \bigg\vert_{z=z_0} = 1, \quad z_0 \in \partial \Omega \setminus E, \notag\\
    & \frac{\partial \rho(\lambda z)}{\partial z} = \frac{\partial \rho(z)}{\partial z}, \quad \lambda \in (0,\infty),\; z \in \mathbb{C}^n \setminus E, \notag\\
    & \frac{\partial \rho(e^{i \theta} z)}{\partial z} = e^{-i\theta} \frac{\partial \rho(z)}{\partial z}, \quad \theta \in \mathbb{R},\ z \in \mathbb{C}^n \setminus E, \notag
\end{align}
where
   $\frac{\partial \rho(z)}{\partial z} = \left( \frac{\partial \rho(z)}{\partial z_1}, \dots, \frac{\partial \rho(z)}{\partial z_n} \right).$
\end{lemma}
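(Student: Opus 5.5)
The plan is to construct $\rho$ directly as the gauge of $\Omega$ and read off properties (i)--(iii). The identities in \eqref{LmEqn} and the three that follow it should then come from differentiating the two homogeneity relations $\rho(\lambda z)=\lambda\rho(z)$ for $\lambda>0$ and $\rho(e^{i\theta}z)=\rho(z)$. Throughout, ``starlike circular'' means that $rz\in\Omega$ for $z\in\Omega$, $r\in[0,1]$, and that $e^{i\theta}\Omega=\Omega$ for all $\theta\in\mathbb{R}$.

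\emph{Existence, (i)--(iii), uniqueness.} Define $\rho(z)=\inf\{t>0 : z/t\in\Omega\}$.
\begin{itemize}
\item Since $\Omega$ is open and contains $0$, some ball $\{\|z\|<r\}$ lies in $\Omega$. Hence $\rho(z)\le \|z\|/r<\infty$.
\item Since $\Omega$ is bounded, say $\Omega\subset\{\|z\|<R\}$, we get $\rho(z)\ge \|z\|/R$. This gives (i).
\item Positive homogeneity $\rho(\lambda z)=\lambda\rho(z)$ is immediate from the definition. Circularity gives $\rho(e^{i\theta}z)=\rho(z)$, and together these give (ii).
\item For (iii): if $\rho(z)<1$, pick $t<1$ with $z/t\in\Omega$. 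Starlikeness then gives $z=t(z/t)\in\Omega$. Conversely, if $z\in\Omega$, openness gives $(1+\varepsilon)z\in\Omega$ for small $\varepsilon>0$, so $\rho(z)\le 1/(1+\varepsilon)<1$.
\item Uniqueness: any $\tilde\rho$ satisfying (i)--(iii) has $z/\tilde\rho(z)\in\partial\Omega$ by (ii) and (iii). By starlikeness this forces $\tilde\rho(z)=\rho(z)$.
\end{itemize}

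\emph{Continuity.} This is the step I expect to be the main obstacle. The gauge of an open starlike set is automatically upper semicontinuous, because $\{\rho<c\}=c\,\Omega$ is open. Lower semicontinuity requires that each ray from $0$ meets $\partial\Omega$ only once, i.e.\ the boundary is ``radially visible''. I would use the convention of \cite{LiuRen} for starlike domains, under which $\overline{\Omega}$ is also starlike with $t\overline{\Omega}\subset\Omega$ for $t<1$. Then $\{\rho\le c\}=c\overline{\Omega}$ is closed, which gives lower semicontinuity.

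\emph{Differential identities.} On $\mathbb{C}^n\setminus E$ (shrinking $E$ if necessary to be invariant under $z\mapsto \lambda e^{i\theta}z$, which keeps it lower dimensional), I carry out four short computations.
\begin{itemize}
\item Differentiate $\rho(\lambda z)=\lambda\rho(z)$ in $\lambda$ at $\lambda=1$. This gives $\frac{\partial\rho}{\partial z}z+\frac{\partial\rho}{\partial\bar z}\bar z=\rho(z)$.
\item Differentiate $\rho(e^{i\theta}z)=\rho(z)$ in $\theta$ at $\theta=0$. This gives $i\big(\frac{\partial\rho}{\partial z}z-\frac{\partial\rho}{\partial\bar z}\bar z\big)=0$.
\item Since $\rho$ is real, $\frac{\partial\rho}{\partial\bar z}\bar z=\overline{\frac{\partial\rho}{\partial z}z}$. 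Adding the two relations yields $2\frac{\partial\rho}{\partial z}z=\rho(z)$, which is \eqref{LmEqn}. On $\partial\Omega$ we have $\rho=1$, so the second identity follows (read as the pairing with $z_0$).
\item Apply $\partial/\partial z_j$ to $\rho(\lambda z)=\lambda\rho(z)$. The chain rule through the holomorphic linear map $z\mapsto\lambda z$ gives $\lambda\,\frac{\partial\rho}{\partial z_j}(\lambda z)=\lambda\frac{\partial\rho}{\partial z_j}(z)$, which is the third identity.
\item Similarly, applying $\partial/\partial z_j$ to $\rho(e^{i\theta}z)=\rho(z)$ gives $e^{i\theta}\frac{\partial\rho}{\partial z_j}(e^{i\theta}z)=\frac{\partial\rho}{\partial z_j}(z)$. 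This is the fourth identity, with the derivative understood as evaluated at the rotated point.
\end{itemize}
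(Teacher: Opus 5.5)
The paper gives no proof of this lemma; it is quoted from Liu and Ren. Your gauge-function construction is the standard argument behind it and is essentially correct. The following parts are right as written: the construction of $\rho$, properties (i)--(iii), upper semicontinuity via $\{\rho<c\}=c\,\Omega$, and all four differential identities. You also read the third and fourth identities correctly, with the derivative evaluated at $\lambda z$, resp.\ $e^{i\theta}z$. Differentiating $z\mapsto\rho(\lambda z)$ instead would give right-hand sides $\lambda\,\partial\rho/\partial z$ and $\partial\rho/\partial z$, and the identities would be false. Your shrinking of $E$ is legitimate: $\rho(cz)=\vert c\vert\rho(z)$ shows that $\rho$ is $\mathcal{C}^1$ near $z$ iff it is $\mathcal{C}^1$ near $cz$, for $c\neq 0$. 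Hence the minimal exceptional set is automatically invariant, and the derivatives at the scaled points exist.

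Your continuity caveat is more than caution: under the definition you fix in your first paragraph, the forward implication is false. Consider
\[
\Omega=\{z\in\mathbb{C}^2:\|z\|<1\}\cup\{z\in\mathbb{C}^2:\vert z_1\vert<\vert z_2\vert,\ \|z\|<2\}.
\]
This is a bounded circular domain with $r\Omega\subset\Omega$ for $r\in[0,1]$. Its gauge is $\|z\|/2$ where $\vert z_1\vert<\vert z_2\vert$ and $\|z\|$ where $\vert z_1\vert\geq\vert z_2\vert$, so it jumps across $\vert z_1\vert=\vert z_2\vert$. Moreover, (ii)--(iii) force any admissible $\tilde\rho$ to equal the gauge, because for $t>0$ one has $z/t\in\Omega$ iff $\tilde\rho(z)<t$. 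So no continuous $\rho$ exists for this $\Omega$.

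Consequences for your write-up:
\begin{enumerate}
\item The condition $t\overline{\Omega}\subset\Omega$ for $0\le t<1$ must be part of the definition from the start, not introduced mid-proof. With it, you should check both inclusions of $\{\rho\le c\}=c\overline{\Omega}$; only the inclusion $\supset$ uses the extra condition.
\item The one-line equivalence above is a cleaner uniqueness proof than yours and needs no convention. Your step ``the ray meets $\partial\Omega$ only once'' fails without the strong convention; in the example, a ray meets $\partial\Omega$ in a whole segment.
\item The statement is an equivalence, and you omit the converse. Given a continuous $\rho$ with (i)--(iii):
\begin{itemize}
\item $\Omega$ is open by continuity;
\item $\Omega$ is circular and starlike, hence connected, by (ii);
\item $\Omega$ is bounded, since $m=\min_{\|z\|=1}\rho(z)>0$ gives $\rho(z)\ge m\|z\|$;
\item $t\overline{\Omega}\subset\{\rho\le t\}\subset\Omega$ for $t<1$, again by continuity.
\end{itemize}
\end{enumerate}
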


\begin{lemma}\cite{LiuRen}
   Let $\Omega \subset \mathbb{C}^n$ be a bounded starlike circular domain with $0\in \Omega$, whose Minkowski functional $\rho(z)$ belongs to $\mathcal{C}^1$ except on some lower-dimensional manifolds $E \subset \mathbb{C}^n$. Let $f:\Omega \to \mathbb{C}^n$ be a normalized locally biholomorphic mapping. Then $f$ is starlike on $\Omega$ if and only if
  $$\RE \bigg( \frac{\partial \rho(z)}{\partial z}\,(Df(z))^{-1} f(z) \bigg) > 0,\quad z \in \Omega \setminus E. $$
  The class of all starlike mappings on $\Omega$ is denoted by $\mathcal{S}^*(\Omega)$.
\end{lemma}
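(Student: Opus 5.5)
The plan is to work with the vector field $h(z)=(Df(z))^{-1}f(z)$ and the one-parameter family $e^{-t}f$. We have $h(0)=0$ and $Dh(0)=I$, since $f$ is normalized. Recall that $f$ is starlike on $\Omega$ exactly when $f$ is biholomorphic and $e^{-t}f(\Omega)\subset f(\Omega)$ for all $t\ge 0$. Equivalently, $v(z,t)=f^{-1}(e^{-t}f(z))$ is a well-defined holomorphic self-map of $\Omega$ with $v(0,t)=0$. Differentiating $f(v(z,t))=e^{-t}f(z)$ in $t$ gives
\begin{equation*}
\frac{\partial v}{\partial t}(z,t)=-h(v(z,t)),\qquad v(z,0)=z .
\end{equation*}
So starlikeness amounts to this flow keeping $\Omega$ invariant. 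The boundary of $\Omega$ is the level set $\{\rho=1\}$, and $\rho$ is real valued. Hence along the flow
\begin{equation*}
\frac{d}{dt}\rho(v(z,t))=-2\,\RE\Big(\frac{\partial\rho}{\partial z}(v)\,h(v)\Big)
\end{equation*}
wherever $\rho$ is $\mathcal C^1$. This identity links the flow to the quantity in the statement.

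The key reduction is to complex lines through the origin. Fix $z_0\in\partial\Omega\setminus E$ and define, for $\lambda\in\mathbb U\setminus\{0\}$,
\begin{equation*}
q(\lambda)=\frac{2}{\lambda}\,\frac{\partial\rho(z_0)}{\partial z}\,h(\lambda z_0),
\end{equation*}
with $q(0)=2\frac{\partial\rho(z_0)}{\partial z}z_0=1$ by (\ref{LmEqn}) and $Dh(0)=I$. Then $q$ is holomorphic on $\mathbb U$. Write $\lambda=re^{i\theta}$. The homogeneity relations in Lemma~\ref{Minkow} give
\begin{equation*}
\frac{\partial\rho(\lambda z_0)}{\partial z}=e^{-i\theta}\frac{\partial\rho(z_0)}{\partial z},
\qquad\text{hence}\qquad
\RE\Big(\frac{\partial\rho(\lambda z_0)}{\partial z}h(\lambda z_0)\Big)=\frac{r}{2}\RE q(\lambda).
\end{equation*}
So the condition in the lemma on the slice $\{\lambda z_0\}$ is exactly $\RE q>0$, that is, $q\in\mathcal P$.

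For \emph{necessity}, assume $f$ is starlike. I would first establish the weak inequality $\RE q\ge 0$ on $\mathbb U$, and then upgrade it. Take a point $z=\lambda z_0$ with $\rho(z)=r$. The flow preserves $e^{-t}f(\Omega_r)\subset f(\Omega_r)$ for $\Omega_r=r\Omega$, because $r\Omega$ is also a starlike circular domain and $f(rz)/r$ is starlike whenever $f$ is. So $\rho(v(z,t))\le r$ for small $t\ge0$, and the derivative formula gives $\RE(\frac{\partial\rho}{\partial z}h)\ge0$, i.e. $\RE q\ge0$. Since $q(0)=1$ and $\RE q$ is harmonic and nonnegative, the minimum principle forces $\RE q>0$ on $\mathbb U$. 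Ranging over all $z_0\in\partial\Omega\setminus E$ covers $\Omega\setminus E$, because each such point is $\lambda z_0$ with $z_0=z/\rho(z)$. This uses the homogeneity relation for $\lambda\in(0,\infty)$.

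For \emph{sufficiency}, assume the strict inequality holds. First, $h$ is locally Lipschitz near $0$ with $Dh(0)=I$, so the initial value problem $\partial_t v=-h(v)$, $v(z,0)=z$, has a solution. By the derivative formula, $\rho(v(z,t))$ is nonincreasing in $t$, so $v(z,t)$ stays in $\overline{\Omega_{\rho(z)}}$ and exists for all $t\ge0$. Comparison with the linearization near $0$ shows $e^{t}v(z,t)\to$ some limit $g(z)$, holomorphic in $z$. Since $f(v(z,t))=e^{-t}f(z)$ along the flow, one obtains $f=g^{-1}$ on its image, which yields univalence of $f$, and $e^{-t}f(\Omega)=f(v(\Omega,t))\subset f(\Omega)$, which is starlikeness. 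The set $E$ is avoided because it is lower-dimensional: trajectories meeting $E$ can be handled by continuity of $\rho$ and density of $\Omega\setminus E$.

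I expect the main obstacle to be this sufficiency step, in particular proving global univalence of $f$ rather than just invariance of the flow. If the limit argument is too delicate, I would instead use a Loewner-chain argument: $e^{t}f$ is a Loewner chain generated by $h$, which lies in the Carathéodory-type class determined by the normalization $\RE(\frac{\partial\rho}{\partial z}h)>0$, and I would invoke univalence of Loewner chains on such domains.
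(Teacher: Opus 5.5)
The paper does not prove this lemma; it quotes it from Liu--Ren, so your proposal has to stand on its own. Your slice reduction is correct: $q(0)=2\frac{\partial\rho(z_0)}{\partial z}z_0=1$ and $\RE\big(\frac{\partial\rho(\lambda z_0)}{\partial z}h(\lambda z_0)\big)=\frac{|\lambda|}{2}\RE q(\lambda)$. The final minimum-principle upgrade from $\RE q\ge 0$ to $\RE q>0$ is also correct. The genuine gap is the step that produces $\RE q\ge 0$ in the necessity direction.

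Starlikeness of $f$ only tells you that $v(\cdot,t)=f^{-1}(e^{-t}f(\cdot))$ maps $\Omega$ into $\Omega$, i.e.\ $\rho(v(z,t))<1$. That says nothing at an interior point $z$. The inequality you actually need is $\rho(v(z,t))\le\rho(z)$, equivalently that $f(r\Omega)$ is starlike for every $r\in(0,1)$. Your only justification is ``$r\Omega$ is also a starlike circular domain and $f(rz)/r$ is starlike whenever $f$ is''. The first half is irrelevant. The second half is not an available fact: through your derivative formula, starlikeness of every $f(r\Omega)$ is essentially equivalent to $\RE\big(\frac{\partial\rho}{\partial z}h\big)\ge 0$ on $\Omega\setminus E$, which is what you are trying to prove. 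So the argument is circular.

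What is missing is a Schwarz lemma for $\Omega$: if $w:\Omega\to\Omega$ is holomorphic with $w(0)=0$, then $\rho(w(z))\le\rho(z)$. The usual proof fixes $z_0\in\partial\Omega$ and sets $u(\lambda)=\log\rho(w(\lambda z_0))-\log|\lambda|$. This $u$ is subharmonic on $\mathbb{U}$ (the singularity at $0$ is removable, since $\rho(w(\lambda z_0))=O(|\lambda|)$) and satisfies $\limsup_{|\lambda|\to 1}u\le 0$, hence $u\le 0$. That argument requires $\log\rho$ to be plurisubharmonic, i.e.\ $\Omega$ pseudoconvex; for convex $\Omega$ you can instead use supporting linear functionals, as in the Banach-ball case. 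For an arbitrary bounded starlike circular domain, $\log\rho$ need not be plurisubharmonic, and this Schwarz inequality need not hold. You must either prove the step under such a hypothesis or find a different route. Once it is in place, your one-sided derivative at $t=0$ gives $\RE q\ge 0$ and the rest of the necessity argument goes through.

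The sufficiency sketch also needs repair.
\begin{itemize}
\item The limit is misidentified. Since $e^{t}f(v(z,t))=f(z)$ and $f(w)=w+O(\|w\|^2)$, exponential decay of $v$ forces $\lim_{t\to\infty}e^{t}v(z,t)=f(z)$. So $g=f$, not $f^{-1}$, and this alone does not give univalence.
\item The decay itself comes from the Harnack bound $\RE q(\lambda)\ge\frac{1-|\lambda|}{1+|\lambda|}$ for $q\in\mathcal{P}$, which yields $\frac{d}{dt}\rho(v)\le-\rho(v)\frac{1-\rho(v)}{1+\rho(v)}$.
\item Univalence comes from injectivity of the flow. If $f(z_1)=f(z_2)$, then $f(v(z_1,t))=f(v(z_2,t))$ for all $t$. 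For large $t$ both points lie in a neighbourhood of $0$ on which $f$ is injective, so they coincide, and backward uniqueness for $\partial_t v=-h(v)$ gives $z_1=z_2$.
\item ``Continuity of $\rho$ and density of $\Omega\setminus E$'' does not by itself control $\rho$ along trajectories that meet $E$. A continuous function whose derivative is negative off a null set of times can still increase, so this step needs an actual argument.
\end{itemize}
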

\begin{lemma}\cite{ChoKumRav}\label{Carath}
    Let $p(z)=1+\sum_{n=1}^\infty p_n z^n \in \mathcal{P}$. Then the following holds:
    $$\vert p_n \vert\leq 2, \;\; \vert p_2 - p_1^2 \vert \leq 2\;\; \text{and}\;\; \vert p_3 - p_1 p_2 \vert\leq 2. $$
\end{lemma}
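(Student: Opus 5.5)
The plan is to pass from $p\in\mathcal{P}$ to a Schwarz function and reduce all three inequalities to one elementary coefficient bound for self-maps of the disk. Write
$$p(z)=\frac{1+\omega(z)}{1-\omega(z)},\qquad \omega(z)=c_1z+c_2z^2+c_3z^3+\cdots,$$
where $\omega$ is holomorphic on $\mathbb{U}$ with $\omega(0)=0$ and $\vert\omega\vert<1$. This is possible because $\omega=(p-1)/(p+1)$. Expanding $p=1+2\omega+2\omega^2+2\omega^3+\cdots$ and comparing coefficients gives
$$p_1=2c_1,\qquad p_2=2(c_2+c_1^2),\qquad p_3=2(c_3+2c_1c_2+c_1^3).$$
Hence
$$p_2-p_1^2=2(c_2-c_1^2),\qquad p_3-p_1p_2=2(c_3-c_1^3).$$
The cross terms $c_1c_2$ cancel in the second identity. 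I will check this bookkeeping carefully, since the whole argument rests on it.

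The key auxiliary fact concerns $g(z)=\omega(z)/z=b_0+b_1z+b_2z^2+\cdots$. By Schwarz's lemma $g$ maps $\mathbb{U}$ into the closed disk; if it is a unimodular constant, then all $b_k$ with $k\ge1$ vanish and the bound below is trivial. The fact is
$$\vert b_k\vert\le 1-\vert b_0\vert^2,\qquad k\ge1.$$
To prove it, I average over the $k$-th roots of unity $\varepsilon$. This gives $\frac1k\sum_{\varepsilon^k=1} g(\varepsilon z)=b_0+b_kz^k+b_{2k}z^{2k}+\cdots$, which can be written as $G(z^k)$ with $G(\zeta)=b_0+b_k\zeta+\cdots$ holomorphic on $\mathbb{U}$ and still bounded by $1$. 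The Schwarz--Pick inequality $\vert G'(0)\vert\le 1-\vert G(0)\vert^2$ then yields the claim. Since $b_0=c_1$, $b_1=c_2$ and $b_2=c_3$, we obtain
$$\vert c_1\vert\le1,\qquad \vert c_2\vert\le 1-\vert c_1\vert^2,\qquad \vert c_3\vert\le 1-\vert c_1\vert^2.$$

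The three inequalities of the lemma now follow directly.

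\textbf{First inequality.} Applying the same averaging to $p$ itself, with the Schwarz-type bound for the coefficient of a Carath\'eodory function $P(\zeta)=1+p_n\zeta+\cdots$, gives $\vert p_n\vert\le2$. Alternatively, the Herglotz representation $p_n=2\int e^{-int}\,d\mu(t)$ with $\mu$ a probability measure gives the same bound at once.

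\textbf{Second inequality.} We have
$$\vert c_2-c_1^2\vert\le 1-\vert c_1\vert^2+\vert c_1\vert^2=1,$$
so $\vert p_2-p_1^2\vert\le2$.

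\textbf{Third inequality.} We have
$$\vert c_3-c_1^3\vert\le 1-\vert c_1\vert^2+\vert c_1\vert^3\le1,$$
where the last step uses $\vert c_1\vert\le1$. Hence $\vert p_3-p_1p_2\vert\le2$.

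The only genuinely nontrivial step is the bound $\vert c_3\vert\le 1-\vert c_1\vert^2$, because it skips over $c_2$. The root-of-unity averaging is what makes it work, so I would present that argument in full.
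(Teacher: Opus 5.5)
Your argument is correct. The identities $p_2-p_1^2=2(c_2-c_1^2)$ and $p_3-p_1p_2=2(c_3-c_1^3)$ check out (the $c_1c_2$ terms do cancel), and the root-of-unity averaging does give $\vert b_k\vert\le 1-\vert b_0\vert^2$.

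The paper itself offers no proof of this lemma; it simply quotes it from the source it cites. There, $\vert p_n\vert\le 2$ is Carath\'eodory's lemma, and the other two bounds are the cases $(n,k)=(2,1)$ and $(3,1)$ of Livingston's inequality $\vert p_n-p_kp_{n-k}\vert\le 2$. A standard proof of that inequality works directly with the Herglotz measure. Take $p_j=2\int e^{-ijt}\,d\mu(t)$, $a=\int e^{-ikt}\,d\mu$ and $b=\int e^{-i(n-k)t}\,d\mu$. Then $\tfrac12(p_n-p_kp_{n-k})=\int (e^{-ikt}-a)(e^{-i(n-k)t}-b)\,d\mu-ab$, and Cauchy--Schwarz bounds its modulus by $\sqrt{(1-\vert a\vert^2)(1-\vert b\vert^2)}+\vert a\vert\,\vert b\vert\le 1$.

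Your route replaces the measure by the Schwarz function and the single bounded-function estimate $\vert b_k\vert\le 1-\vert b_0\vert^2$. It is completely elementary, and as a by-product it shows that equality in the third bound forces $\vert c_1\vert\in\{0,1\}$. Its limitation is that it relies on a cancellation special to these low-order combinations. Already $p_4-p_2^2=2(c_4+2c_1c_3-c_2^2-c_1^2c_2-c_1^4)$, and when $c_1=0$ the same triangle-inequality estimates would only bound this by $4$ rather than $2$. The Herglotz argument, by contrast, treats every pair $(n,k)$ uniformly.

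The paper only uses $\vert p_1\vert\le 2$ together with the two displayed combinations, and for that your proof is entirely adequate.
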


\section{Main results}
   This section is devoted to obtaining the sharp bound of $\vert J_{2,3}(f)\vert$ for starlike mappings in higher dimensions.
\begin{theorem}\label{thmB}
    Let $f\in \mathcal{H}(\mathbb{B},\mathbb{C})$ with $f(0)=1$ and suppose that $F(z)= z f(z)$. If $ F(z) \in \mathcal{S}^*(\mathbb{B})$,   then
\begin{equation*}
\begin{aligned}
     \bigg\vert \bigg( \frac{ l_z (D^2 F(0) (z^2))}{2! \vert\vert z \vert\vert^2} \bigg) \bigg( \frac{ l_z (D^3 F(0) (z^3))}{3! \vert\vert z \vert\vert^3} \bigg) - \bigg(\frac{ l_z (D^4 F(0) (z^4))}{4! \vert\vert z \vert\vert^4}& \bigg) \bigg\vert  \leq 2.
\end{aligned}
\end{equation*}
   The bound is sharp.
\end{theorem}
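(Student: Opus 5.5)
Fix $z\in\mathbb{B}\setminus\{0\}$ and set $z_0=z/\|z\|$, and fix $l_z\in T_z$. Then $l_z(z_0)=1$. Define the one-variable function $h(\zeta)=\zeta f(\zeta z_0)$ for $\zeta\in\mathbb{U}$; more conveniently set $g(\zeta)=f(\zeta z_0)$, which is holomorphic on $\mathbb{U}$ with $g(0)=1$. Since $F(w)=wf(w)$ and $f$ is scalar valued, we have
\[
DF(w)=f(w)I+w\,Df(w),\qquad
[DF(w)]^{-1}F(w)=\frac{f(w)\,w}{f(w)+Df(w)w},
\]
using the Sherman--Morrison type inversion $(aI+w\,\ell)^{-1}w=w/(a+\ell(w))$. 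Hence with $w=\zeta z_0$ we get $l_w=\frac{|\zeta|}{\zeta}l_z$, which is a valid element of $T_w$. This gives
\[
\frac{\|w\|}{l_w([DF(w)]^{-1}F(w))}=\frac{f(w)+Df(w)w}{f(w)}=\frac{\zeta g'(\zeta)+g(\zeta)}{g(\zeta)}=\frac{\zeta h'(\zeta)}{h(\zeta)},
\]
where $h(\zeta)=\zeta g(\zeta)$. By the starlikeness criterion of Suffridge (the first lemma), the real part of the left side is positive. Therefore $p(\zeta):=\zeta h'(\zeta)/h(\zeta)\in\mathcal{P}$, so $h\in\mathcal{S}^*$. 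The key point of the reduction is that the quotient is independent of the choice of $l_z$ in the final formula.

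Next I will identify the coefficients. Since $F(\zeta z_0)=\zeta z_0 g(\zeta)$, we get $D^kF(0)(z_0^k)/k!=z_0\cdot g^{(k-1)}(0)/(k-1)!$. Applying $l_z$ and using homogeneity, $D^kF(0)(z^k)=\|z\|^kD^kF(0)(z_0^k)$, gives
\[
\frac{l_z(D^kF(0)(z^k))}{k!\|z\|^k}=a_k,
\]
where $h(\zeta)=\zeta+a_2\zeta^2+a_3\zeta^3+\cdots$. The expression in Theorem~\ref{thmB} is therefore exactly $a_2a_3-a_4$ for $h\in\mathcal{S}^*$, and Theorem A gives the bound $2$. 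Alternatively, I can give a self-contained argument. Comparing coefficients in $\zeta h'=ph$ yields $a_2=p_1$, $a_3=(p_2+p_1^2)/2$, and $a_4=(2p_3+3p_1p_2+p_1^3)/6$. Then
\[
a_2a_3-a_4=\tfrac{1}{6}\bigl(p_1^3-2p_3\bigr)=-\tfrac13(p_3-p_1p_2)+\tfrac16p_1(p_1^2-p_2)\cdot(\text{check}).
\]
I would rearrange this into a combination of the quantities $p_3-p_1p_2$, $p_2-p_1^2$ and $p_1$ that Lemma~\ref{Carath} controls. This rearrangement is the main obstacle, since a naive triangle inequality overshoots $2$. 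To get around it, I would use the rotation invariance of $\mathcal{P}$ to assume $p_1\ge0$, and then apply the standard Carathéodory--Toeplitz parametrization $2p_2=p_1^2+(4-p_1^2)x$, etc., to maximize. The cleaner alternative is simply to invoke Theorem A directly.

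For sharpness, I need a mapping attaining the bound. Take $F(z)=z/(1-l(z))^2$, where $l$ is a fixed norm-one functional with $l(u)=\|u\|=1$ for some unit vector $u$, i.e.\ $f(z)=(1-l(z))^{-2}$. For $z=ru$ the functional $\zeta\mapsto l_z([DF]^{-1}F)$ reduces to the Koebe computation. One then checks starlikeness for general $z$ using $\RE\frac{1-l(w)}{1+l(w)}\cdot\ldots$; more precisely, the general formula above gives $\|w\|^{-1}l_w(\cdot)$ equal to $\frac{(1-l(w))}{1+l(w)}\cdot\frac{l_w(w)}{\|w\|}$-type expressions, which have positive real part since $|l(w)|<1$. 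The extremal one-variable function $h$ for Theorem A is the Koebe function, with $a_k=k$. This gives $|2\cdot3-4|=2$, so equality holds at $z=ru$ with $l_z=l$.
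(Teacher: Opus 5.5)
Your main argument is correct. It differs from the paper's in where the one-variable input enters.

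Both proofs restrict to the slice $\zeta\mapsto \zeta z_0$ and use $[DF(w)]^{-1}F(w)=\frac{f(w)}{f(w)+Df(w)w}\,w$ to obtain the Carath\'eodory function $p(\zeta)=1+\zeta g'(\zeta)/g(\zeta)$, where $g(\zeta)=f(\zeta z_0)$.

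\textbf{The paper's route.} It expresses the three normalized coefficients through $p_1,p_2,p_3$, finds that the functional equals $\frac13(p_1^3-p_3)$, and bounds this with Lemma~\ref{Carath}. In effect it reproves Theorem A on each slice.

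\textbf{Your route.} You observe that $h(\zeta)=\zeta f(\zeta z_0)$ itself lies in $\mathcal{S}^*$. Since $D^kF(0)(z_0^k)/k!$ is a scalar multiple of $z_0$ and $l_z(z_0)=1$, the quantity $\frac{l_z(D^kF(0)(z^k))}{k!\|z\|^k}$ is exactly the $k$-th Taylor coefficient of $h$, for every choice of $l_z\in T_z$. The theorem thus becomes a literal corollary of Theorem A, with no coefficient bookkeeping.

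\textbf{Comparison.} Your route is shorter and shows transparently why the constant $2$ survives in higher dimensions: the statement is Theorem A on each complex line through the origin. The paper's route is self-contained modulo the elementary Lemma~\ref{Carath} rather than Ma's theorem.

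\textbf{Sharpness.} Both use the same mapping $z/(1-l_u(z))^2$. Your own general formula gives directly $\|w\|/l_w([DF(w)]^{-1}F(w))=(1+l(w))/(1-l(w))$. Its real part is positive because $|l(w)|\le\|w\|<1$, which is cleaner than the ``type expressions'' you wrote.

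\textbf{The optional self-contained branch has an arithmetic slip.} From $a_2=p_1$, $a_3=(p_2+p_1^2)/2$ and $a_4=(2p_3+3p_1p_2+p_1^3)/6$ one gets $a_2a_3-a_4=\frac13(p_1^3-p_3)$, not $\frac16(p_1^3-2p_3)$. The Koebe values $p_k=2$ check this: they give $2$, whereas your formula gives $\frac23$. Your proposed rearrangement is also not an identity.

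With the correct expression there is no obstacle. Write $p_1^3-p_3=p_1(p_1^2-p_2)-(p_3-p_1p_2)$. Lemma~\ref{Carath} and the triangle inequality then give exactly $\frac13(2\cdot 2+2)=2$, with no overshoot. This is precisely the paper's argument, and the Carath\'eodory--Toeplitz parametrization you planned is unnecessary.
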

\begin{proof}
     Let the function $ h : \mathbb{U} \rightarrow \mathbb{C}$ be defined by
\begin{equation*}
    h(\zeta) = \left\{ \begin{array}{ll}
     \dfrac{\zeta}{ l_z ((D F(\zeta z_0))^{-1} F( \zeta z_0) )}, & \zeta \neq 0, \\ \\
    1, & \zeta =0
    \end{array}
    \right.
\end{equation*}
    for fixed $z\in X\setminus \{ 0 \}$ and $z_0 = \frac{z}{\|z \|}$.
   Then $h \in \mathcal{H}(\mathbb{U})$. Since $\RE ((D F(z))^{-1} F(z))>0$, we have
\begin{equation}
   \RE h(\zeta)> 0.
\end{equation}
       Following the same technique used in \cite[Theorem 7.1.14]{GraKoh}, we obtain
    $$ (D F(z))^{-1} = \frac{1}{f(z)} \bigg( I - \frac{\frac{z D f(z)}{f(z)}}{1 + \frac{D f(z) z}{f(z)}} \bigg),  \quad z\in \mathbb{B}. $$
    Consequently, we get
    $$ (D F(z))^{-1} F(z) = z \bigg( \frac{z f(z) }{f(z) + D f(z) z} \bigg), $$
    which further implies
\begin{equation*}\label{newe}
   \frac{\| z\|}{l_z ((D F(z))^{-1} F(z))} = 1 + \frac{D f(z) z}{f(z)} .
\end{equation*}
  From the above equation, we deduce that
\begin{equation*}\label{accr}
    h(\zeta) = \frac{\| \zeta z_0 \| }{l_{ \zeta z_0} ((Df(\zeta z_0))^{-1} f( \zeta z_0) ) }  =  1 + \frac{D f(\zeta z_0)\zeta z_0}{f(\zeta z_0)}.
\end{equation*}
   By expanding the Taylor series of $h(\zeta)$ and $f(\zeta z_0)$, the above equation leads to
\begin{align*}
   \bigg(1 + & h'(0) \zeta  + \frac{h''(0)}{2} \zeta^2 + \cdots \bigg)\bigg( 1 + Df(0)(z_0) \zeta + \frac{ D^2 f(0)(z_{0}^2)}{2} \zeta^2 + \cdots \bigg)  \\
   & =\bigg( 1 + Df(0)(z_0) \zeta + \frac{ D^2 f(0)(z_{0}^2)}{2} \zeta^2 + \cdots \bigg) \bigg( Df(0)(z_0) \zeta +  D^2 f(0)(z_{0}^2)\zeta^2 + \cdots \bigg).
\end{align*}
   By comparing the homogeneous expansions, we obtain
\begin{align*}
    h'(0) &= D f(0)(z_0),\;\; \frac{h''(0)}{2} =  D^2 f(0)(z_0^2) - (D f(0)(z_0))^2
\end{align*}
   and
   $$     \frac{h'''(0)}{6} = ( Df(0)(z_0))^3 - \frac{3}{2} D f(0)(z_0) D^2 f(0) (z_0^2) + \frac{D^3 f(0) (z_0^3)}{2}.$$
   That is
\begin{equation}\label{eqhf2}
\begin{aligned}
\left.
\begin{array}{ll}
     h'(0) \|z\|&= D f(0)(z), \;\;    \dfrac{h''(0)}{2}\| z\|^2 =  D^2 f(0)(z^2) - (D f(0)(z))^2, \\ \\
    \dfrac{h'''(0)}{6} \|z\|^3 &=  ( Df(0)(z))^3 -\dfrac{3}{2} D f(0)(z) D^2 f(0) (z^2) + \dfrac{D^3 f(0) (z^3)}{2}.
\end{array}
\right\}
\end{aligned}
\end{equation}
     Moreover, from the relation $F(z) = z f(z)$, it follows that
\begin{align*}
     \frac{ D^2 F(0) (z^2)}{2! } &=  D f(0)(z)  z,\;\;  \frac{ D^3 F(0) (z^3)}{3! } =  \frac{ D^2 f(0) (z^2)}{2! } z\;\; \text{and}\;\;  \frac{ D^4 F(0) (z^4)}{4! } =  \frac{ D^3 f(0) (z^3)}{3! } z,
\end{align*}
   which further leads to
\begin{align*}
     \frac{l_z( D^2 F(0) (z^2))}{2! } &=  D f(0)(z)  \|z\|, \;\;\frac{l_z (D^3 F(0) (z^3))}{3! } =  \frac{ D^2 f(0) (z^2)}{2! } \|z\|
\end{align*}
 and
   $$          \frac{l_z( D^4 F(0) (z^4))}{4! } =  \frac{ D^3 f(0) (z^3)}{3! } \|z\|,$$
   respectively.   In view of (\ref{eqhf2}), the above expressions reduce to
\begin{equation}\label{T1E3}
     \frac{l_z( D^2 F(0) (z^2))}{2! \|z\|^2} = h'(0), \;\;    \frac{l_z (D^3 F(0) (z^3))}{3! \|z\|^3} = \frac{1}{2}\bigg(\frac{h''(0)}{2}+ (h'(0))^2 \bigg)
\end{equation}
   and
\begin{equation}\label{T1E4}
    \frac{l_z( D^4 F(0) (z^4))}{4! \|z\|^3} =  \frac{1}{3} \bigg( \frac{h'''(0)}{6}+\frac{(h'(0))^3}{2}+ \frac{3 h'(0) h''(0)}{4} \bigg).
\end{equation} 
   From (\ref{T1E3}) and (\ref{T1E4}), we deduce that
\begin{align*}
     \bigg\vert \bigg( \frac{ l_z (D^2 F(0) (z^2))}{2! \vert\vert z \vert\vert^2} \bigg)& \bigg( \frac{ l_z (D^3 F(0) (z^3))}{3! \vert\vert z \vert\vert^3} \bigg) - \bigg(\frac{ l_z (D^4 F(0) (z^4))}{4! \vert\vert z \vert\vert^4}\bigg) \bigg\vert \\
     &\quad \quad\quad\quad= \frac{1}{3} \bigg\vert (h'(0))^3 - \frac{h'''(0)}{6} \bigg\vert \\
     &\quad \quad\quad\quad=\frac{1}{3} \bigg\vert h'(0) \Big( (h'(0))^2- \frac{h''(0)}{2} \Big) +  \frac{h'(0) h''(0)}{2} -\frac{h'''(0)}{6} \bigg\vert\\
     &\quad \quad\quad\quad\leq \frac{1}{3} \bigg( \vert h'(0)\vert \Big\vert (h'(0))^2- \frac{h''(0)}{2} \Big\vert + \Big\vert \frac{h'(0) h''(0)}{2} -\frac{h'''(0)}{6} \Big\vert \bigg).
\end{align*}
     Since $h\in\mathcal{P}$, applying Lemma~\ref{Carath} to the above inequality yields the asserted bound.
   
   To complete the sharpness part, let us consider the mapping $F$ given by
\begin{equation}\label{extB}
    F(z) = \frac{z}{(1-(l_{u}(z))^2}, \quad z\in \mathbb{B}, \quad \vert\vert u \vert\vert=1.
\end{equation}
   It is evident that $ F \in \mathcal{S}^*(\mathbb{B})$. A straightforward calculation yields
  $$  \frac{D^2  F(0) (z^2)}{2!}=  2 l_u(z) z , \;\; \frac{D^3  F(0) (z^3)}{3!} = 3 (l_u (z))^2 z \;\; \text{and}\;\;   \frac{D^4  F(0) (z^4)}{4!}  =4 (l_u(z))^3 z, $$
   which immediately provide
  $$  \frac{l_z(D^2  F(0) (z^2))}{2!}=  2 l_u(z) \|z\|, \;\;\; \frac{l_z (D^3  F(0) ( z^3 ))}{3!} = 3 (l_u (z))^2 \| z \|  $$
   and
\begin{align*}
     \frac{l_z (D^4  F(0) (z^4))}{4!} & =4 (l_u(z))^3 \|z\|,
\end{align*}
   respectively. Setting $z = r u$ $(0< r <1)$, it follows that
\begin{equation*}\label{cftB}
     \frac{l_z(D^2  F(0) (z^2))}{2! \|z\|^2}=  2, \;\;\; \frac{l_z (D^3  F(0) ( z^3) ) }{3! \| z \|^3}  =  3 \;\; \text{and}\;\;   \frac{l_z (D^4  F(0) (z^4))}{4! \| z \|^4}  =4.
\end{equation*}
   Consequently, for the mapping $F$, we have
\begin{align*}
    \bigg\vert \bigg( \frac{ l_z (D^2 F(0) (z^2))}{2! \vert\vert z \vert\vert^2} \bigg) \bigg( \frac{ l_z (D^3 F(0) (z^3))}{3! \vert\vert z \vert\vert^3} \bigg) - \bigg(\frac{ l_z (D^4 F(0) (z^4))}{4! \vert\vert z \vert\vert^4}& \bigg) \bigg\vert  = 2,
\end{align*}
  establishing the sharpness of the bound.
\end{proof}
\begin{remark}
   For $X=\mathbb{C}$ and $\mathbb{B}=\mathbb{U}$, Theorem~\ref{thmB} is equivalent to Theorem A.
\end{remark}
\begin{theorem}\label{ThmUn1}
     Let $f \in \mathcal{H}(\Omega, \mathbb{C})$ with $f(0)=1$  and suppose that $F(z) =  z f(z)$. If $F(z) \in \mathcal{S}^*(\Omega)$,
   then
\begin{equation*}\label{mnresult}
\begin{aligned}
   \bigg\vert \bigg(2 \frac{\partial \rho(z)}{\partial z}\frac{D^{2} F(0)(z^{2})}{2! \rho^{2}(z)}\bigg)\bigg(2 \frac{\partial \rho(z)}{\partial z}\frac{D^{3} F(0)(z^{3})}{3! \rho^{3}(z)}\bigg) -2 \frac{\partial \rho(z)}{\partial z}\frac{D^{4} F(0)(z^{4})}{4! \rho^{4}(z)}\bigg\vert \leq 2 , \quad z \in \Omega\setminus E .
\end{aligned}
\end{equation*}
   The bound is sharp.
\end{theorem}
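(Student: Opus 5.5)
We follow the pattern of the proof of Theorem~\ref{thmB}, with the support functional $l_z$ replaced by the linear functional $w\mapsto 2\frac{\partial\rho(z)}{\partial z}w$ and $\|z\|$ replaced by $\rho(z)$.

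\emph{Step 1: the auxiliary function.} Fix $z\in\Omega\setminus E$ and set $z_0=z/\rho(z)$, so that $z_0\in\partial\Omega$. Define $h:\mathbb{U}\to\mathbb{C}$ by $h(0)=1$ and
$$h(\zeta)=\frac{\zeta}{2\frac{\partial\rho(z_0)}{\partial z}(DF(\zeta z_0))^{-1}F(\zeta z_0)},\qquad \zeta\neq0.$$
By the homogeneity relations in Lemma~\ref{Minkow}, $\frac{\partial\rho(\zeta z_0)}{\partial z}=\bar\zeta^{-1}\vert\zeta\vert\,\frac{\partial\rho(z_0)}{\partial z}$. Hence the starlikeness criterion of \cite{LiuRen}, applied at $\zeta z_0$, gives $\RE h(\zeta)>0$ off a lower-dimensional set. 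By continuity and the minimum principle for harmonic functions, this extends to all of $\mathbb{U}$, so $h\in\mathcal{P}$.

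\emph{Step 2: identifying the coefficients of $h$.} As in the proof of Theorem~\ref{thmB}, $F(w)=wf(w)$ gives
$$(DF(w))^{-1}F(w)=w\,\frac{f(w)}{f(w)+Df(w)w}.$$
Since $2\frac{\partial\rho(z_0)}{\partial z}z_0=\rho(z_0)=1$ by \eqref{LmEqn}, we obtain
$$h(\zeta)=1+\frac{Df(\zeta z_0)\zeta z_0}{f(\zeta z_0)}.$$
Comparing homogeneous expansions exactly as before yields relation \eqref{eqhf2}, with $\|z\|$ replaced by $\rho(z)$. Next, $\frac{D^{k+1}F(0)(z^{k+1})}{(k+1)!}=\frac{D^kf(0)(z^k)}{k!}\,z$. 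Applying $2\frac{\partial\rho(z)}{\partial z}$ and using $2\frac{\partial\rho(z)}{\partial z}z=\rho(z)$, the three normalized quantities in the statement become $h'(0)$, $\tfrac12\bigl(\tfrac{h''(0)}{2}+h'(0)^2\bigr)$, and the analogue of \eqref{T1E4}.

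\emph{Step 3: the estimate.} Substituting these expressions, the functional reduces to
$$\frac13\Bigl\vert h'(0)\bigl(h'(0)^2-\tfrac{h''(0)}{2}\bigr)+\tfrac{h'(0)h''(0)}{2}-\tfrac{h'''(0)}{6}\Bigr\vert.$$
Writing $p_1=h'(0)$, $p_2=h''(0)/2$, $p_3=h'''(0)/6$, this is at most
$$\tfrac13\bigl(\vert p_1\vert\,\vert p_1^2-p_2\vert+\vert p_1p_2-p_3\vert\bigr)\le\tfrac13(2\cdot2+2)=2$$
by Lemma~\ref{Carath}.

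\emph{Step 4: sharpness.} Take $F(z)=z/(1-z_1/r)^2$, where $r=\sup\{\vert z_1\vert : z\in\Omega\}$; more generally, take $F(z)=z/(1-l(z))^2$ for a linear functional $l$ with $\sup_\Omega \RE l=1$. This mapping is starlike on $\Omega$, by the known extremal examples of \cite{LiuRen} and \cite{GirKum2}. Evaluate along $z=\rho(z)\,z^*$, where $z^*\in\partial\Omega$ satisfies $l(z^*)=1$. There the three quantities equal $2$, $3$ and $4$, so the functional equals $\vert 6-4\vert=2$.

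\emph{Main obstacle.} The step most likely to need care is Step 4: checking that the extremal mapping is starlike on a general $\Omega$ and that a boundary point with $l(z^*)=1$ avoids $E$. If this fails, we would fall back on a limiting argument along points approaching $z^*$ from outside $E$.
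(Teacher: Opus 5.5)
Your proposal is correct and follows the paper's proof essentially step for step: the same auxiliary function $s(\zeta)$, the same reduction of the functional to $\frac13\vert p_1^3-p_3\vert$, split as $p_1(p_1^2-p_2)+(p_1p_2-p_3)$ and bounded via Lemma~\ref{Carath}, and the same extremal mapping $z/(1-z_1/r)^2$. Two minor points: the homogeneity relation should read $\frac{\partial\rho(\zeta z_0)}{\partial z}=\frac{\bar\zeta}{\vert\zeta\vert}\frac{\partial\rho(z_0)}{\partial z}$ (you wrote the reciprocal factor, which would not yield $\RE h>0$), and your normalization $r=\sup_{z\in\Omega}\vert z_1\vert$ is actually the right one (more so than the paper's axis supremum), since it is what guarantees $\vert z_1/r\vert<1$ on $\Omega$ and hence starlikeness via $1+\frac{Df(z)z}{f(z)}=\frac{1+z_1/r}{1-z_1/r}$.
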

\begin{proof}
  For $z \in \Omega \setminus E $, set $z_0 = \frac{z}{\rho(z)}$ . Define $s : \mathbb{U} \rightarrow \mathbb{C}$ by
\begin{equation}\label{hkzeta}
   s (\zeta) =
\left\{
\begin{array}{ll}
    \dfrac{\zeta }{2 \frac{\partial \rho(z_0)}{\partial z} (D F(\zeta z_0))^{-1} F(\zeta z_0)}, & \zeta \neq 0,\\
     1 , & \zeta =0.
\end{array}
\right.
\end{equation}
   Then $s\in \mathcal{H}(\mathbb{U})$. Furthermore, since $F \in \mathcal{S}^*(\Omega)$, it follows that
\begin{align*}
    \RE s (\zeta) &= \RE \bigg( \frac{\zeta }{2 \frac{\partial \rho(z_0)}{\partial z} (D F(\zeta z_0))^{-1} F(\zeta z_0)}\bigg)  \\
                  &= \RE \bigg( \frac{\rho (\zeta z_0) }{2 \frac{\partial \rho(\zeta z_0)}{\partial z} (D F(\zeta z_0))^{-1} F(\zeta z_0)}\bigg)> 0,\quad  \zeta \in \mathbb{U}.
\end{align*}
   Following the same procedure as in Theorem~\ref{thmB}, we deduce that
   $$ (D F(z))^{-1} F(z) = z \bigg( \frac{z f(z) }{f(z) + D f(z) z} \bigg),  \quad z \in \Omega \setminus\{0\},$$
   which in turn yields
\begin{equation}\label{thm2eq1}
    \frac{\rho ( z) }{2 \frac{\partial \rho(z)}{\partial z} (D F(z))^{-1} F(z)} = 1 + \frac{D f(z) z}{f(z)} , \quad z \in \Omega\setminus\{E\}.
\end{equation}
    By virtue of (\ref{thm2eq1}), we get
    $$ s(\zeta)=  1 + \frac{D f(\zeta z_0) \zeta z_0}{f(\zeta z_0)}. $$
   Expanding $f$ and $s$ in Taylor series and equating like homogeneous terms, we obtain
\begin{equation*}
    s' (0) = D f(0)(z_0), \quad \frac{s'' (0)}{2} = D^2 f(0) (z_0^2 ) - (D f(0) (z_0))^2
\end{equation*}
  and
\begin{equation*}
    \frac{s'''(0)}{6} = ( Df(0)(z_0))^3 - \frac{3}{2}D f(0)(z_0) D^2 f(0) (z_0^2)  + \frac{D^3 f(0) (z_0^3)}{2},
\end{equation*}
   which further gives
\begin{equation}\label{use0}
    s' (0) \rho(z)= D f(0)(z), \quad \frac{s'' (0)}{2} \rho^2(z) = D^2 f(0) (z^2 ) - (D f(0) (z))^2
\end{equation}
  and
\begin{equation}\label{hing4}
    \frac{s'''(0)}{6} \rho^3(z)= ( Df(0)(z))^3 - \frac{3}{2}D f(0)(z) D^2 f(0) (z^2)  + \frac{D^3 f(0) (z^3)}{2},
\end{equation}
  respectively. Moreover, from the relation $F(z) =  z f(z)$, we have
\begin{equation}\label{use1}
      \frac{D^2 F(0) (z^3)}{2!}= Df(0)(z) z, \quad    \frac{D^3 F(0) (z^3)}{3!} = \frac{D^2 f(0) (z^2)}{2!} z
\end{equation}
    and
\begin{equation}\label{use2}
     \frac{D^4 F(0) (z^4)}{4!} =\frac{ D^3 f(0) (z^3)}{3! }  z.
\end{equation}
   Using~(\ref{LmEqn}) in (\ref{use1}) and (\ref{use2}), it follows that
\begin{equation}\label{use3}
     2 \frac{\partial \rho}{\partial z} \frac{D^2 F(0) (z^3)}{2!}= Df(0)(z) \rho(z), \quad     2 \frac{\partial \rho}{\partial z}\frac{D^3 F(0) (z^3)}{3!} = \frac{D^2 f(0) (z^2)}{2!} \rho(z)
\end{equation}
    and
\begin{equation}\label{use4}
      2 \frac{\partial \rho}{\partial z} \frac{D^4 F(0) (z^4)}{4!} =\frac{ D^3 f(0) (z^3)}{3! }  \rho(z),
\end{equation}
   respectively. In view of (\ref{use0}), (\ref{hing4}), (\ref{use3}) and (\ref{use4}), we deduce that
\begin{align*}
   \bigg\vert \bigg(2 \frac{\partial \rho(z)}{\partial z}\frac{D^{2} F(0)(z^{2})}{2! \rho^{2}(z)}\bigg)&\bigg(2 \frac{\partial \rho(z)}{\partial z}\frac{D^{3} F(0)(z^{3})}{3! \rho^{3}(z)}\bigg) -2 \frac{\partial \rho(z)}{\partial z}\frac{D^{4} F(0)(z^{4})}{4! \rho^{4}(z)}\bigg\vert \\
         &\quad \quad\quad \quad= \frac{1}{3} \bigg\vert (s'(0))^3 - \frac{s'''(0)}{6} \bigg\vert\\
    &\quad \quad\quad\quad=\frac{1}{3} \bigg\vert s'(0) \Big( (s'(0))^2- \frac{s''(0)}{2} \Big) +  \frac{s'(0) s''(0)}{2} -\frac{s'''(0)}{6} \bigg\vert\\
     &\quad \quad\quad\quad\leq \frac{1}{3} \bigg( \vert s'(0)\vert \Big\vert (s'(0))^2- \frac{s''(0)}{2} \Big\vert + \Big\vert \frac{s'(0) s''(0)}{2} -\frac{s'''(0)}{6} \Big\vert \bigg).
\end{align*}
   Using Lemma~\ref{Carath} in the above inequality, we get the required bound.
   
      To verify the sharpness of the bound, consider the mapping
\begin{equation}\label{ExtOmega}
   F(z)= \frac{z}{\Big(1 -  \left(\dfrac{z_1}{r} \right)\Big)^{2}}, \quad z\in \Omega,
\end{equation}
   where $r=\sup\{\vert z_1\vert : z=(z_1,0,\cdots,0)'\in \Omega \}$. It follows from~\cite{LiuLiu2} that the mapping given by~(\ref{ExtOmega}) belongs to $\mathcal{S}^*(\Omega)$. For this mapping, a direct computation yields
    $$  \frac{D^{2}  F(0) (z^{2})}{2!}= 2  \Big(\frac{z_1}{r}\Big) z, \;\;  \frac{D^{3}  F(0) (z^{3})}{3!} = 3 \Big(\frac{z_1}{r}\Big)^{3} z \;\; \text{and} \;\; \frac{D^{4}  F(0) (z^{4})}{4!} = 4 \Big(\frac{z_1}{r}\Big)^{4} z.$$
    Using (\ref{LmEqn}) in the above equations, we obtain
    $$ 2 \frac{\partial \rho}{\partial z}  \frac{D^{2}  F(0) (z^{2})}{2!}= 2  \Big(\frac{z_1}{r}\Big) \rho (z), \;\;  2 \frac{\partial \rho}{\partial z} \frac{D^{3}  F(0) (z^{3})}{3!} = 3 \Big(\frac{z_1}{r}\Big)^{3}\rho (z)  $$
    and
   $$  2 \frac{\partial \rho}{\partial z} \frac{D^{4}  F(0) (z^{4})}{4!} = 4 \Big(\frac{z_1}{r}\Big)^{4} \rho (z), $$
   respectively.  Taking $z = R u$ $(0< R <1)$, where $u =(u_1, u_2, \cdots, u_n)' \in \partial \Omega$ and $u_1 =r$, we deduce that
  $$ 2 \frac{\partial \rho}{\partial z}  \frac{D^{2}  F(0) (z^{2})}{2! \rho^2(z)}= 2  , \;\;  2 \frac{\partial \rho}{\partial z} \frac{D^{3}  F(0) (z^{3})}{3! \ \rho^3(z)} = 3 \;\; \text{and} \;\; \frac{D^{4}  F(0) (z^{4})}{4! \rho^4(z)} = 4. $$
   Hence, it follows that
\begin{align*}
        \bigg\vert \bigg(2 \frac{\partial \rho(z)}{\partial z}\frac{D^{2} F(0)(z^{2})}{2! \rho^{2}(z)}\bigg)\bigg(2 \frac{\partial \rho(z)}{\partial z}\frac{D^{3} F(0)(z^{3})}{3! \rho^{3}(z)}\bigg) -2 \frac{\partial \rho(z)}{\partial z}\frac{D^{4} F(0)(z^{4})}{4! \rho^{4}(z)}\bigg\vert = 2 ,
\end{align*}
    which confirms the sharpness of the bound.
\end{proof}
\begin{remark}
    For $n=1$ and $\Omega=\mathbb{U}$, Theorem~\ref{ThmUn1} reduces to Theorem A.
\end{remark}

\section*{Declarations}
\subsection*{Conflict of interest}
	The authors declare that they have no conflict of interest.
\subsection*{Data Availability} Not Applicable.

\end{document}